\newcommand{\Z}{\mathbb{Z}}
\newcommand{\R}{\mathbb{R}}
\newcommand{\T}{\mathbb{T}}
\newcommand{\gau}[1]{\langle #1 \rangle}
\newcommand{\mfun}[1]{\operatorname{f}_{#1}}
\newcommand{\minv}[1]{\alpha(#1)}
\newcommand{\mbe}[1]{\operatorname{S}_{#1}}
\newcommand{\me}[1]{\operatorname{\Lambda}(#1)}
\newcommand{\ms}[1]{\operatorname{\sigma}(#1)}
\newtheorem{theorem}{Theorem}
\newtheorem{lemma}{Lemma}
\theoremstyle{definition}
\newtheorem{remark}{Remark}
\title{A dequantized metaplectic knot invariant}
\author{Rinat Kashaev}
\address{Section de math\'ematiques, Universit\'e de Gen\`eve,
2-4 rue du Li\`evre, 1211 Gen\`eve 4, Suisse\\}
\email{rinat.kashaev@unige.ch}
\date{April 24, 2017}
\thanks{Supported in part by Swiss National Science Foundation}
 \dedicatory{In memory of Ludwig Faddeev}
\begin{document}
\begin{abstract} 
Let $K\subset S^3$ be a knot, $X:= S^3\setminus K$ its complement, and $\T$ the circle group identified with $\R/\Z$. To any oriented long knot diagram of $K$, we associate a quadratic polynomial  in variables bijectively associated with the bridges of the diagram such that, when the variables projected to $\T$ satisfy the linear equations characterizing the first homology group $H_1(\tilde{X}_2)$ of the double cyclic covering of $X$, the polynomial projects down to a well defined $\T$-valued function on  $T^1(\tilde{X}_2,\T)$ (the dual of the torsion part $T_1$ of $H_1$). This function is sensitive to knot chirality, for example, it seems to confirm  chirality of the knot $10_{71}$. It also distinguishes the knots $7_4$ and $9_2$ known to have identical Alexander polynomials and the knots $9_2$ and K11n13 known to have identical Jones polynomials but does not distinguish $7_4$ and K11n13.
\end{abstract}
\maketitle
\section{Introduction}
The \emph{metaplectic} knot invariants were originally introduced in \cite{MR974081} under the name of \emph{cyclotomic} invariants and subsequently generalised in \cite{MR990215,MR1012438} where the name ``metaplectic'' has also been introduced. These are quantum invariants specified by a choice of a finite field or, more generally, a finite cyclic group, but without specifying  a deformation parameter like $q$ or Planck's constant. We think that the term ``metaplectic'' is  more appropriate because those invariants can be generalized to the context of arbitrary locally compact abelian groups which are self-dual in the sense of Pontryagin and admit a \emph{gaussian exponential}, a special case of Weil's \emph{second order character}~\cite{MR0165033}.  We call such groups \emph{gaussian}. A simple and prototypical example of an infinite gaussian group is the real line $\R$ with the gaussian exponential 
\begin{equation}
\gau{x}:=e^{\pi i x^2},\quad \forall x\in \R.
\end{equation}
The relevance of gaussian exponentials for quantum topology comes from the fact that they satisfy the (constant) Yang--Baxter relation in the form called \emph{star-triangle  relation with difference property}~\cite{MR690578,MR990215}, which equivalently can be formulated as an identity for unitary operators in the Hilbert space of square integrable functions on the underlying gaussian group~\cite{MR3491319}. In the case of $\R$ this takes the form
\begin{equation}
e^{\pi i \hat p^2}e^{\pi i \hat q^2}e^{\pi i \hat p^2}=e^{\pi i \hat q^2}e^{\pi i \hat p^2}e^{\pi i \hat q^2}
\end{equation}
where $\hat p$ and $\hat q$ are  self-adjoint Heisenberg's \emph{momentum} and \emph{position} operators in $L^2(\R)$ satisfying the commutation relation
\begin{equation}
\hat p\hat q-\hat q\hat p=\frac{1}{2\pi i}
\end{equation}
which corresponds to the normalisation where Planck's constant $\hbar$ is fixed by the equality $2\pi\hbar=1$. Interestingly, even in the case of infinite gaussian groups, these solutions  can be utilised for construction of invariants of at least long knots or, more generally, string links, without encountering the problem of divergencies. In the case of the gaussian group $\R$, the associated invariant is conjecturally equal to
\begin{equation}
\frac{e^{\frac{\pi i}{4}\text{signature}}}{\sqrt{\text{determinant}}}
\end{equation}
where the fact that the phase factor is  an integer multiple of $\pi/4$ is due to the known gaussian integral
\begin{equation}
\int_{\R}e^{\pi i x^2}\operatorname{d}\! x=e^{\frac{\pi i}4}.
\end{equation}
In this context, the term ``metaplectic invariant'' is more relevant than ``cyclotomic invariant'' because the metaplectic extension of $SL(2,\R)$ in quantum mechanics is generated by operator valued gaussian exponentials of the form
\begin{equation}
e^{\pi i \hat p^2}, \ e^{\pi i x\hat q^2},\quad \forall x\in \R.
\end{equation}

In this paper, we ``dequantize'' this example in the sense that we write it as a sum of the form
\begin{equation}
\frac{1}{\text{determinant}}\sum_{x\in T^1(\tilde X_2,\T)}e^{2\pi i \mfun{}(x)},
\end{equation}
where $T^1(\tilde X_2,\T)$ is the dual of the torsion part of the first homology of the double cyclic covering space $\tilde X_2$ of the knot complement $X$ with coefficients in the circle group $\T\simeq\R/\Z$, and $\mfun{}(x)$ is a certain function invariantly associated to the knot.  Among the properties of $\mfun{}(x)$, it is worth mentioning that it changes its sign  for the mirror image of the knot, though, at the moment of this writing I do not know how to prove this formally.

To illustrate how this function looks like in actual calculations, consider
a knot $K$ with $T_1(\tilde X_2)\simeq\Z/n\Z$, so that we can identify
\begin{equation}
T^1(\tilde X_2,\T)=\{x\in \T\ \vert\ nx=0\}.
\end{equation}
In the examples we have analysed, the invariant function always takes the form
\begin{equation}
\mfun{}(x)=n\alpha x^2, \quad \alpha\in\Z.
\end{equation}
As we do not have any canonical choice for the generator of $T_1(\tilde X_2)$, one way of extracting an invariant information would be to consider the whole orbit of $\mfun{}(x)$ under the action of the automorphism group of $T_1(\tilde X_2)$, i.e. the set of functions of the form
\begin{equation}
x\mapsto n \alpha k^2 x^2,\quad k\in \operatorname{Aut}(\Z/n\Z)=\{k\in\Z_{>0}\cap\Z_{<n}\ \vert\ \operatorname{gcd}(k,n)=1\}.
\end{equation}
Now, the subset $\minv{K}$ of elements of the smallest absolute value in the set
\begin{equation}
\bigcup_{k\in \operatorname{Aut}(\Z/n\Z)}\left\{ m\in \Z\ \left\vert\ |m|\le \frac{n-1}2,\ \alpha=mk^2\pmod n\right.\right\}
\end{equation}
 is an invariant of $K$. It is either a non-zero singleton, which will mean that the knot is chiral under our hypothesis that the invariant functions for the mirror pairs differ by sign, or it is  the zero singleton (this is certainly the case when $T_1(\tilde X_2)=0$) or else a pair of non-zero numbers of opposite sign, and in these latter two cases the invariant does not distinguish the knot from its mirror image.
 \subsection{The trefoil knot}
 Taking for $3_1$ the left-handed trefoil (see the text below where it is used as an illustrative example), we have
\begin{equation}
T_1(\tilde{X}_2)\simeq\Z/3\Z,\quad X=S^3\setminus 3_1,
\end{equation}
and
\begin{equation}
\minv{3_1}=\{-1\}
\end{equation}
thus confirming chirality of the trefoil.
\subsection{The figure-eight knot}
We have
\begin{equation}
T_1(\tilde{X}_2)\simeq\Z/5\Z,\quad X=S^3\setminus 4_1,
\end{equation}
and
\begin{equation}
\minv{4_1}=\{\pm 1\}
\end{equation}
which is consistent with amphichirality of the figure-eight knot.
\subsection{The knot $6_1$}
We have
\begin{equation}
T_1(\tilde{X}_2)\simeq\Z/9\Z,\quad X=S^3\setminus 6_1,
\end{equation}
and
\begin{equation}
\minv{6_1}=\{1\}
\end{equation}
which seemingly distinguishes $6_1$ from its mirror image despite the fact that its signature vanishes.

\subsection{The knot $10_{71}$}
We have
\begin{equation}
T_1(\tilde{X}_2)\simeq\Z/77\Z,\quad X=S^3\setminus 10_{71},
\end{equation}
and
\begin{equation}
\minv{10_{71}}=\{-1\}
\end{equation}
seemingly confirming chirality of $10_{71}$ despite the fact that many known invariants fail to do so.

\subsection{The knots $7_4$, $9_2$, and K11n13}
We have
\begin{equation}
T_1(\tilde{X}_2)\simeq\Z/15\Z,\quad X=S^3\setminus K,\quad K\in\left\{7_4,9_2,\text{K11n13}\right\},
\end{equation}
and 
\begin{equation}
\minv{7_4}= \minv{\text{K11n13}}=\{-2\}
\end{equation}
while
\begin{equation}
\minv{9_2}=\{1\}
\end{equation}
thus distinguishing between $7_4$ and $9_2$ known to have identical Alexander polynomials and also between $9_2$ and K11n13 known to have identical Jones polynomials but not distinguishing between $7_4$ and K11n13.

\subsection{The knots $8_8$, $10_{129}$}
We have
\begin{equation}
T_1(\tilde{X}_2)\simeq\\Z/25\Z,\quad X=S^3\setminus K,\quad K\in\left\{8_8,10_{129}\right\},
\end{equation}
and 
\begin{equation}
 \minv{8_8}=\{\pm2\},\quad  \minv{10_{129}}=\{\pm1\}
\end{equation}
thus distinguishing between these two knots which are known to have coinciding Alexander and Jones polynomials as well as Khovanov homology~\cite{MR2350287}. However, the chiralities of both knots remain undetected. 
\subsection{The knot $9_{46}$}
This is an example of a non-cyclic homology $T_1(\tilde X_2)\simeq (\Z/3\Z)^2$, so that we have the identification
\begin{equation}
T^1(\tilde X_2,\T)=\{(x,y)\in\T^2\ \vert\ 3x=3y=0\},\quad X=S^3\setminus 9_{46},
\end{equation}
and the invariant function takes the form
\begin{equation}
\mfun{}(x,y)=3xy.
\end{equation}
which does not look to contain the information about the chirality of the knot.

In the next section we describe in detail the construction of this function for arbirary long knot diagrams and formulate the main Theorem~\ref{thm}. The proof of this theorem is quite technical, uses the machinery of quantum integrable models adapted to a distribution valued  IRF model with gauge invariance along the line described in~\cite{MR3491319} and vertex-face transformations. These techniques are not very essential for the calculation of the invariant function according to the rules we describe here. For that reason, we leave the proof for a later writing.  It is also possible that there exists a direct combinatorial proof, without using the techniques of quantum integrable models but at the moment of this writing I am not aware of such a proof. 

\subsection*{Acknowledgements} This work is supported in part by Swiss National Science Foundation.

\section{Formulation of the result}
Let $D$ be a long (polygonal) knot diagram drawn horizontally. By convention, the diagram is always oriented from right to left as in this picture:
$$
\begin{tikzpicture}[baseline=5,scale=0.5]
\coordinate (v0)  at (0,0);
\coordinate
(v1) at (1,0);
\coordinate 
(v2)  at (2,1);
\coordinate 
(v3)  at (3,0);
\coordinate (v4)  at (4.2,1.2);
\coordinate   (v5)  at (0.8,1.2);
\coordinate  
(v6)  at (2,0);
\coordinate  
(v7)  at (3,1);
\coordinate  
(v8)  at (4,0);
\coordinate (d1)  at  ($(v1)!.75!(v2)$);
\coordinate (d2)  at  ($(v6)!.75!(v7)$);
\coordinate (d3)  at  ($(v3)!.625!(v4)$);

\coordinate (c1)  at (intersection of v1--v2 and v5--v6);
\node [inner sep=2pt] (c1n)  at (c1){};
\coordinate (c2)  at (intersection of v2--v3 and v6--v7);
\node [inner sep=2pt] (c2n)  at (c2){};
\coordinate (c3)  at (intersection of v3--v4 and v7--v8);
\node [inner sep=2pt] (c3n)  at (c3){};
\coordinate (v9)  at (5,0);
    \draw[->,very thick,blue] (v9)--(v8)--(v7)--(c2n)--(v6)--(v5)--(v4)--(c3n)--(v3)--(v2)--(c1n)--(v1)--(v0);
    
\end{tikzpicture}
$$
 where we depict a long knot diagram of the left handed trefoil.
A \emph{bridge} of $D$ is an over-passing path along $D$ between either two under-crossings or an under-crossing and one of the two infinities. By using the orientation of $D$, each bridge has well defined \emph{beginning} and  \emph{ending} points (including the points at infinity for the infinite bridges).  We let $C_D$ and $B_D$ denote the sets of all crossings and all bridges of $D$ respectively. The total number of bridges is always the number of crossings plus one. 

A \emph{crossing vertex} is a point on a bridge exactly at an over-passing crossing.  On each bridge, with the exception of the left infinite one, we mark a generic point in the neighborhood of its ending point and call it \emph{bridge vertex}. The condition to be generic here means that there are no pairs of bridge vertices or bridge and crossing vertices aligned vertically. 
We complement the diagram $D$ by adding auxiliary vertical half-lines that start at bridge vertices and go downwards intersecting $D$ in finitely many points which we call \emph{intersections}, and we enumerate them counting from below upwards with a separate enumeration for each line, and the enumerations on different lines are independent of each other. 
In the following picture, the crossing and bridge vertices are drawn as small and large filled circles respectively and the auxiliary lines as thin black lines:
$$
\begin{tikzpicture}[baseline=5,scale=1]
\coordinate (v0)  at (0,0);
\coordinate
(v1) at (1,0);
\coordinate 
(v2)  at (2,1);
\coordinate 
(v3)  at (3,0);
\coordinate (v4)  at (4.2,1.2);
\coordinate   (v5)  at (0.8,1.2);
\coordinate  
(v6)  at (2,0);
\coordinate  
(v7)  at (3,1);
\coordinate  
(v8)  at (4,0);
\coordinate (d1)  at  ($(v1)!.75!(v2)$);
\coordinate (d2)  at  ($(v6)!.75!(v7)$);
\coordinate (d3)  at  ($(v3)!.625!(v4)$);

\coordinate (c1)  at (intersection of v1--v2 and v5--v6);
\node [inner sep=2pt] (c1n)  at (c1){};
\coordinate (c2)  at (intersection of v2--v3 and v6--v7);
\node [inner sep=2pt] (c2n)  at (c2){};
\coordinate (c3)  at (intersection of v3--v4 and v7--v8);
\node [inner sep=2pt] (c3n)  at (c3){};
\coordinate (v9)  at (5,0);
\draw (d1)--+(-90:1) 
(d2)--+(-90:1) 
(d3)--+(-90:1) 
;
    \draw[->,very thick,blue] (v9)--(v8)--(v7)--(c2n)--(v6)--(v5)--(v4)--(c3n)--(v3)--(v2)--(c1n)--(v1)--(v0);
    
\draw[scale=1.25,blue,fill=blue] (d1) circle (.05) (d2) circle (.05) (d3) circle (.05)(c1) circle (.03) (c2) circle (.03) (c3) circle (.03);
\end{tikzpicture}
$$
 The bridge vertices themselves are treated as intersections depending on which side  of the local part of the diagram the vertical line departs from. Namely, a bridge vertex is treated as an intersection if the line departs to the right from the direction of the diagram as in this picture: 
\begin{tikzpicture}[baseline=-3]
\coordinate (c)  at (0,0);
\node [inner sep=3pt] (cn)  at (c){};
\draw (-.4,0)--+(-90:.3);
\draw[very thick,blue]  (0,-.2)--
(c)--
  (0,0.2);
   \draw[->,very thick,blue]  (-1,0)-- (cn)--(1,0);
   \draw[scale=1,blue,fill=blue] 
(c) circle (.04) (-.4,0) circle (.08) ;
\end{tikzpicture}, and it is not treated as an intersection if the vertical line departs to the left  as in this picture:
\begin{tikzpicture}[baseline=-3]
\coordinate (c)  at (0,0);
\node [inner sep=3pt] (cn)  at (c){};
\draw (.4,0)--+(-90:.3);
\draw[very thick,blue]  (0,-.2)--
(c)--
  (0,0.2);
   \draw[->,very thick,blue]  (1,0)-- (cn)--(-1,0);
   \draw[scale=1,blue,fill=blue] 
(c) circle (.04) (.4,0) circle (.08) ;
\end{tikzpicture}.

A \emph{vertex} is either a crossing vertex or a bridge vertex or the beginning point of a bridge or else the point at left infinity \footnote{This is in order to include the ending point of the left infinite bridge where no bridge vertex has been specified, while the right infinity is already a vertex as the beginning point of the right infinite bridge.}.
A \emph{(bridge) segment} is a part of a bridge between any (distinct) vertices. So the set of all bridge segments is in bijection with the set of edges of the underlying four-valent graph of $D$ with both infinite edges included. The orientation of $D$ induces a linear order on the set of segments both globally and on each bridge separately so that, in particular, it makes sense to talk about the first and the last segments among all segments or the first and the last segments of a given bridge.

Let $A_D:=\Z[B_D,\epsilon]$ be the polynomial algebra with integer coefficients with elements of  the set $B_D$ as indeterminates plus one additional element $\epsilon$ which is inverse of 2, i.e. it satisfies the relation $2\epsilon=1$. 

An assignment of elements of $A_D$ to segments of $D$ is called \emph{admissible} if it satisfies the following rules:
\begin{enumerate}
\item \label{rule1} if a segment belongs to a bridge $x$ then the assigned element is of the form $x+m\epsilon$, where $m\in \Z$;
 \item \label{rule2} if  for two segments sharing a common crossing vertex the assigned elements are $\alpha$ and $\beta$, and they are arranged as in this picture 
 $$ 
\begin{tikzpicture}
\coordinate (c)  at (0,0);
\node [inner sep=3pt] (cn)  at (c){};
  \draw[very thick,blue]  (0,-.7)-- node [right]{\textcolor{black}{\tiny $\alpha$}}(c)--node [right]{\textcolor{black}{\tiny $\beta$}}(0,0.7);
   \draw[->,very thick,blue]  (-1,0)-- (cn)--(1,0);
   \draw[scale=1.4,blue,fill=blue] 
(c) circle (.03) (-.3,0) circle (.05) ;
\end{tikzpicture}
 $$
 then $\beta=\alpha+\epsilon$ (independently of the orientation of the over-passing part of $D$);
\item \label{rule3}for any two distinct bridges $x$ and $y$ sharing a common under-pass, i.e. there is a crossing where $x$ ends and $y$ starts or vice versa, and if the elements assigned to the last segment of $x$ and the first segment of $y$ (or vice versa)  are $x+m\epsilon$ and $y+n\epsilon$, then $m+n$ is an odd integer;
\item\label{rule4} the elements assigned to two infinite segments are of the form $x+m\epsilon$ with $m=0$ and $x\in B_D$.
\end{enumerate}
The following picture illustrates an admissible assignment in the case of a left handed trefoil diagram:
$$
\begin{tikzpicture}[baseline=0,scale=1.4]
\coordinate (v0)  at (0,0);
\coordinate [label=above:\textcolor{black}{\tiny $p$}]  (v1) at (1,0);
\coordinate [label=right:\textcolor{black}{\tiny $q-\epsilon$}] (v2)  at (2,1);
\coordinate [label=above:\textcolor{black}{\tiny $q$}] (v3)  at (3,0);
\coordinate (v4)  at (4.2,1.2);
\coordinate   (v5)  at (0.8,1.2);
\coordinate  [label=above:\textcolor{black}{\tiny $r$}] (v6)  at (2,0);
\coordinate  [label=right:\textcolor{black}{\tiny $s-\epsilon$}] (v7)  at (3,1);
\coordinate  [label=above:\textcolor{black}{\tiny $s$}] (v8)  at (4,0);
\coordinate (d1)  at  ($(v1)!.75!(v2)$);
\coordinate (d2)  at  ($(v6)!.75!(v7)$);
\coordinate (d3)  at  ($(v3)!.625!(v4)$);

\coordinate (c1)  at (intersection of v1--v2 and v5--v6);
\node [inner sep=2pt] (c1n)  at (c1){};
\coordinate (c2)  at (intersection of v2--v3 and v6--v7);
\node [inner sep=2pt] (c2n)  at (c2){};
\coordinate (c3)  at (intersection of v3--v4 and v7--v8);
\node [inner sep=2pt] (c3n)  at (c3){};
\coordinate (v9)  at (5,0);
  
   \draw[->,very thick,blue] (v9)--(v8)--(v7)--(c2n)--(v6)--(v5)--node [above ]{\textcolor{black}{\tiny $r-\epsilon$}}(v4)--(c3n)--(v3)--(v2)--(c1n)--(v1)--(v0);
\draw[scale=1.25,blue,fill=blue] 
(d1) circle (.05) (d2) circle (.05) (d3) circle (.05)
(c1) circle (.03) (c2) circle (.03) (c3) circle (.03);
\end{tikzpicture}
$$
where 
\begin{equation}
B_D=\{s,r,q,p\},
\end{equation}
the global linear order being from left to right.
\begin{lemma}
For any (long knot) diagram, an admissible assignment exists.
\end{lemma}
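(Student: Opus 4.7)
The plan is to build an admissible assignment directly, by propagating coefficients along the orientation of $D$. The first step is to enumerate the bridges as $B_1,\ldots,B_{c+1}$ in the order they occur along the orientation, where $c=|C_D|$, so that $B_1$ is the right infinite bridge, $B_{c+1}$ is the left infinite bridge, and $B_i$ and $B_{i+1}$ meet at a unique under-pass $U_i$ for $1\le i\le c$.

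The key observation is that on a single bridge $B$, rules \ref{rule1} and \ref{rule2} almost rigidify the assignment: every segment on $B$ carries a coefficient of the form $B+m\epsilon$ with $m\in\Z$, and $m$ changes by exactly $\pm 1$ between any two consecutive segments sharing a crossing vertex (the sign being determined by the local geometry of that crossing). Because the only vertex on $B$ that is neither a crossing vertex nor a beginning or ending point is the bridge vertex---present on every bridge except $B_{c+1}$---the data of the assignment on each $B_i$ with $i\le c$ reduces to just two integers, namely the coefficient $m_i$ of its first segment and the coefficient $m_i'$ of its last segment (the short one between the bridge vertex and the ending); on $B_{c+1}$ only one such integer is needed. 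Rule \ref{rule4} then fixes $m_1=0$ and pins down the assignment on $B_{c+1}$ entirely, by propagating back from the left infinite segment (whose coefficient is forced to $0$) through the over-crossings of $B_{c+1}$ to determine $m_{c+1}$.

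This leaves $2c-1$ free integer parameters, $m_1'$ together with $(m_i,m_i')$ for $2\le i\le c$, and the $c$ parity constraints from rule \ref{rule3}, which by inspection read $m_i'+m_{i+1}\equiv 1\pmod 2$ for $1\le i\le c$ (with $m_{c+1}$ already known). Each free integer appears in exactly one of these constraints, so the system decouples into $c-1$ independent pairwise parity conditions plus the singleton condition $m_c'\equiv 1-m_{c+1}\pmod 2$, each trivially solvable by choosing values in $\{0,1\}$ of the required parity. I do not anticipate any substantial obstacle: the argument is essentially bookkeeping, and the main care needed is to verify that the rule \ref{rule3} constraints couple the free parameters into a matching rather than into a graph with cycles or longer paths, which is immediate from the fact that the bridges form a linear chain $B_1\to B_2\to\cdots\to B_{c+1}$ with each under-pass shared by exactly two consecutive bridges.
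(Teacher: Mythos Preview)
Your argument is correct. Both you and the paper construct the assignment by propagating along the orientation, but the closing steps differ. The paper first pins down the left infinite bridge $p=B_{c+1}$ from rule~(4), then propagates through $B_1,\dots,B_c$ (implicitly keeping the coefficient unchanged across each bridge vertex), and finally verifies the one remaining rule~(3) constraint at the under-pass into $p$ by a global parity count: the quantity $m+n$ there is obtained from $0$ by adding $2c_D-1$ odd integers (one $\pm1$ for each of the $c_D$ over-crossings and one odd jump at each of the $c_D-1$ earlier under-passes), hence is odd. You instead make the freedom at the bridge vertex explicit: since no rule relates the two segments adjacent to a bridge vertex, the last-segment coefficient $m_i'$ on $B_i$ is genuinely free and occurs in the $i$-th rule~(3) constraint only, so the $c$ parity constraints decouple and are each solvable by a single choice. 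Your route is a touch more elementary because it avoids the counting argument altogether; the paper's count, on the other hand, shows that the \emph{canonical} propagation (with zero jump at every bridge vertex, as in the trefoil example) already produces an admissible assignment without any ad hoc adjustment.
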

\begin{proof}
Let $c_D$ be the number of  crossings and $p\in B_D$ the last (or the left infinite) bridge. We start by assigning the unique possible elements to all segments of $p$ and let $p+m\epsilon$ be the element assigned to the first segment of $p$. Then  we  continue assigning elements one by one to the rest of segments  starting from the first segment (of the right infinite bridge) and following the (global) linear order on the set of segments,  always respecting the rules~(\ref{rule1})--(\ref{rule4}). Let $q+n\epsilon$ be the element assigned to the last segment (of bridge $q$) among all the segments excluding the segments of $p$.  In order to verify admissibility of our assignment it suffices to check that $m+n$ is an odd integer. This is indeed the case because this number is obtained from zero by adding $2c_D-1$ odd integers.
\end{proof}
Having fixed an admissible assignment on $D$,  let $\alpha,\beta, \gamma,\delta$  be the elements assigned to four segments around a crossing arranged according to this picture
$$ 
\begin{tikzpicture}[yscale=1]
\coordinate (c)  at (0,0);
\node [inner sep=3pt] (cn)  at (c){};
  \draw[very thick,blue]  (0,-.7)-- node [right]{\textcolor{black}{\tiny $\delta$}}(c)--node [left]{\textcolor{black}{\tiny $\beta$}}(0,.7);
   \draw[->,very thick,blue]  (-1,0)--  node [below]{\textcolor{black}{\tiny $\alpha$}}(cn)-- node [above]{\textcolor{black}{\tiny $\gamma$}}(1,0);
   \draw[scale=1.4,blue,fill=blue] 
(c) circle (.03) (-.3,0) circle (.05) ;
\end{tikzpicture}
 $$
Then, we assign to the auxiliary vertical line that departs from the bridge vertex next to that crossing the element
\begin{equation}
(\alpha+\gamma-\beta -\delta)(-1)^k
\end{equation}
where $k\ge0$ is the number of intersections of that vertical line with the diagram $D$ (recall that the bridge vertex the line starts from is counted or not as an intersection depending on the relative direction, as it was explained above). 

That finishes our combinatorial set-up,
and the following picture illustrates it in the case of the left-handed trefoil diagram:
\begin{equation}\label{decd}
\begin{tikzpicture}[baseline=20,scale=1.2]
\coordinate (v0)  at (0,0);
\coordinate [label=above:\textcolor{black}{\tiny $p$}]  (v1) at (1,0);
\coordinate [label=left:\textcolor{black}{\tiny $q-\epsilon$}] (v2)  at (2,1);
\coordinate [label=above:\textcolor{black}{\tiny $q$}] (v3)  at (3,0);
\coordinate (v4)  at (4.2,1.2);
\coordinate   (v5)  at (0.8,1.2);
\coordinate  [label=above:\textcolor{black}{\tiny $r$}] (v6)  at (2,0);
\coordinate  [label=right:\textcolor{black}{\tiny $s-\epsilon$}] (v7)  at (3,1);
\coordinate  [label=above:\textcolor{black}{\tiny $s$}] (v8)  at (4,0);
\coordinate (d1)  at  ($(v1)!.75!(v2)$);
\coordinate (d2)  at  ($(v6)!.75!(v7)$);
\coordinate (d3)  at  ($(v3)!.625!(v4)$);

\coordinate (c1)  at (intersection of v1--v2 and v5--v6);
\node [inner sep=2pt] (c1n)  at (c1){};
\coordinate (c2)  at (intersection of v2--v3 and v6--v7);
\node [inner sep=2pt] (c2n)  at (c2){};
\coordinate (c3)  at (intersection of v3--v4 and v7--v8);
\node [inner sep=2pt] (c3n)  at (c3){};
\coordinate (v9)  at (5,0);
\draw (d1)--+(-90:1) node[below]{\textcolor{black}{\tiny $a$}}(d2)--+(-90:1) node[below]{\textcolor{black}{\tiny $b$}}(d3)--+(-90:1) node[below]{\textcolor{black}{\tiny $c$}};
  
   \draw[->,very thick,blue] (v9)--(v8)--(v7)--(c2n)--(v6)--(v5)--node [above ]{\textcolor{black}{\tiny $r-\epsilon$}}(v4)--(c3n)--(v3)--(v2)--(c1n)--(v1)--(v0);
\draw[scale=1.25,blue,fill=blue] 
(d1) circle (.05) (d2) circle (.05) (d3) circle (.05)
(c1) circle (.03) (c2) circle (.03) (c3) circle (.03);
\end{tikzpicture}
\end{equation}
where the auxiliary elements $a,b,c$ assigned to the vertical lines are defined by the equations 
\begin{align}
\label{al1}&a=(q+p-2r)(-1),\\
 &b=(s+r-2q)(-1),\\
\label{al3} &c=(r+q-2s)(-1),
\end{align}
and no bridge vertex is counted as intersection.
 
Recall that $C_D$ denotes the set of crossings of $D$ and we let $I_D$ denote the set of intersections (of $D$ with the auxiliary vertical lines).
Our main definition is the following quadratic polynomial
\begin{equation}
\mbe{D}:=\sum_{x\in C_D} \me{x}+\sum_{y\in I_D}\ms{y}
\end{equation}
where
\begin{equation}
\me{x}=(\beta-\gamma)\delta \qquad \text{if}\qquad x\ =\ 
\begin{tikzpicture}[baseline=-2,yscale=1]
\coordinate (c)  at (0,0);
\node [inner sep=3pt] (cn)  at (c){};
  \draw[<-,very thick,blue]  (0,-.7)-- node [right]{\textcolor{black}{\tiny $\delta$}}(c)--node [left]{\textcolor{black}{\tiny $\beta$}}(0,.7);
   \draw[->,very thick,blue]  (-1,0)--
   (cn)-- node [above]{\textcolor{black}{\tiny $\gamma$}}(1,0);
   \draw[scale=1.4,blue,fill=blue] 
(c) circle (.03) (-.3,0) circle (.05) ;
\end{tikzpicture}
\end{equation}
\begin{equation}
\me{x}=(\delta-\gamma)\delta \qquad \text{if}\qquad x\ =\ 
\begin{tikzpicture}[baseline=-2,yscale=1]
\coordinate (c)  at (0,0);
\node [inner sep=3pt] (cn)  at (c){};
  \draw[->,very thick,blue]  (0,-.7)-- node [right]{\textcolor{black}{\tiny $\delta$}}(c)--node [left]{\textcolor{black}{\tiny $\beta$}}(0,.7);
   \draw[->,very thick,blue]  (-1,0)--
   (cn)-- node [above]{\textcolor{black}{\tiny $\gamma$}}(1,0);
   \draw[scale=1.4,blue,fill=blue] 
(c) circle (.03) (-.3,0) circle (.05) ;
\end{tikzpicture}
\end{equation}
and
\begin{equation}
\ms{y}=(-1)^{k} (\alpha-\epsilon)a 
\end{equation}
if $y$ is the $k$-th intersection on the vertical line which carries auxiliary element $a$ and $\alpha$ is the element assigned to the segment with which the line intersects at $y$. 

Following the analogy with classical and quantum field theory, we will call this polynomial  the \emph{action functional} of the diagram.
 
As an example of calculation, the action functional of the diagram~\eqref{decd} reads explicitly as follows:
\begin{multline}
\mbe{D}=
(r-p)(r-\epsilon)+(q-r)(q-\epsilon)+(s-q)(s-\epsilon)\\-(r-\epsilon)a-(q-\epsilon)b-(s-\epsilon)c\\
=(r-p-a)(r-\epsilon)+(q-r-b)(q-\epsilon)+(s-q-c)(s-\epsilon)\\
=(q-r)(r-\epsilon)+(s-q)(q-\epsilon)+(r-s)(s-\epsilon)\\
=(q-r)r+(s-q)q+(r-s)s=qr+sq+rs-r^2-q^2-s^2
\end{multline}
where in the third equality we used relations~\eqref{al1}--\eqref{al3}.
\begin{remark}
In this simple example, any dependence on the element $\epsilon$ has dropped out completely from the answer for the action functional, even without using the condition $2\epsilon=1$. That means that if we would put $\epsilon=0$ from the beginning, we would come up with the same answer. This is not always the case in general, at least on the level of the action functional, but nonetheless we cannot exclude the possibility that putting $\epsilon=0$ from the very beginning gives rise to the same final topological result, but the proof of the main theorem to be given in a separate writing relies on the fact that $\epsilon=1/2$.
\end{remark}
 Using the notation $X^Y$ for the set of maps from $Y$ to  $X$, let $\tilde F_D$ be the subset of $\R^{B_D}$ specified by the conditions that all auxiliary elements assigned to the vertical lines are integers. Then,  the image $F_D$  of $\tilde F_D$ in $\T^{B_D}$ with respect to the projection map $\R\to \T$ is known to be identified with $H^1(\tilde X_2,\T)$, where $\tilde X_2$ is the double cyclic covering of the complement of the knot corresponding to $D$, see for example~\cite{MR1959408}. In particular, for any Reidemeister equivalent diagrams $D$ and $D'$ the sets $F_D$ and $F_{D'}$ are in a natural bijection.
\begin{theorem}\label{thm}
 The function $e^{2\pi i \mbe{D}}$ restricted to $\tilde F_D$ factors through a well defined element $e^{2\pi i\mfun{D}}$ of  $\T^{F_D}$ invariant under all Reidemeister moves in the set of all long knot diagrams, and this element is independent of the image of the free generator of  $H_1(\tilde X_2)$.
 \end{theorem}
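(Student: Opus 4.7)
The plan is to split the theorem into three claims and treat each in turn: well-definedness of $e^{2\pi i \mbe{D}}$ as a function on $F_D$, invariance under the Reidemeister moves on long knot diagrams, and independence of the rank-one free summand of $H_1(\tilde X_2)$.

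For well-definedness, I would show that for every $x\in B_D$ and every $n\in\Z$, the variation of $\mbe{D}$ under the shift $x\mapsto x+n$ is a $\Z$-linear combination of the auxiliary elements $a$ attached to the vertical lines, plus an integer. Since $\tilde F_D$ is carved out of $\R^{B_D}$ by requiring each such $a$ to be an integer, this variation is itself an integer, and $e^{2\pi i\mbe{D}}$ descends to $F_D$. The computation is local around the bridge $x$: the crossing-vertex contributions $\me{x'}$ lying on $x$ produce a polynomial in $x$ whose non-integer part is cancelled exactly by the intersection contributions $(\alpha-\epsilon)a$ along the vertical line descending from the bridge vertex of $x$. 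The same mechanism shows that $\mbe{D}$ modulo $1$ does not depend on the choice of admissible assignment, since two admissible assignments differ by integer-valued shifts on bridges consistent with rules~(1)--(4).

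For Reidemeister invariance, I would check each of the three moves locally, modifying in parallel the bridge/crossing vertex data and the auxiliary vertical lines. The R1 move creates or removes a kink together with one new bridge vertex and its vertical line; its local contribution to $\mbe{D}$ should reduce to an integer on $\tilde F_D$. The R2 move creates or removes a pair of opposite crossings whose $\me{x}$ terms cancel up to the auxiliary-line contributions induced by the new intersections. The hard case is R3, which is the dequantization of the gaussian Yang--Baxter / star-triangle relation
\begin{equation*}
e^{\pi i \hat p^2}e^{\pi i \hat q^2}e^{\pi i \hat p^2}=e^{\pi i \hat q^2}e^{\pi i \hat p^2}e^{\pi i \hat q^2}.
\end{equation*}
Here one must verify that, after a triangle move, the three crossing-vertex contributions on one side, together with the rearranged intersection data, match those on the other side modulo $\Z$ on $\tilde F_D$.

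For independence of the free generator, I would use the identification $F_D\simeq H^1(\tilde X_2,\T)$ from~\cite{MR1959408}. The rank-one free part of $H_1(\tilde X_2)$ corresponds, at the level of $\tilde F_D\subset\R^{B_D}$, to a one-parameter family of translations $x\mapsto x+t$ of the bridge coordinates (more precisely, a linear combination dual to the preferred lift of the meridian). Under such a translation the change in the quadratic polynomial $\mbe{D}$ is linear in $t$ with coefficients that are $\Z$-linear combinations of the auxiliary elements $a$, hence integers on $\tilde F_D$. The principal obstacle in this program is R3, because a triangle move forces a nontrivial reorganization of $I_D$: the vertical lines attached to the three bridge vertices involved generally descend through different segments after the move, and the parity counts $(-1)^k$ change in a subtle way. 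My plan is to follow the route indicated by the author, translating R3 into an operator identity in the distribution-valued IRF model of~\cite{MR3491319} via a vertex-face transformation and then extracting the dequantized phase identity from it; a direct combinatorial verification is in principle possible but would be considerably more involved.
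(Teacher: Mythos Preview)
The paper does not actually contain a proof of Theorem~\ref{thm}: the author explicitly defers it, saying only that it ``uses the machinery of quantum integrable models adapted to a distribution valued IRF model with gauge invariance along the line described in~\cite{MR3491319} and vertex-face transformations,'' and that a direct combinatorial proof is not known to him. So your proposal is being compared to an outline, and at that level your overall strategy for the Reidemeister part---route R3 through the operator Yang--Baxter identity via the IRF/vertex-face machinery, and hope for elementary local checks for R1 and R2---matches what the author indicates. Your attempt to handle well-definedness and the free-generator independence by direct computation goes beyond what the paper commits to.

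There is, however, a genuine gap in your argument for independence of the free generator. You write that under the diagonal translation $x\mapsto x+t$ the change in $\mbe{D}$ is linear in $t$ with coefficient a $\Z$-linear combination of the auxiliary elements, ``hence integers on $\tilde F_D$.'' But $t$ here is a \emph{continuous} real parameter (the free part of $H^1(\tilde X_2,\T)$ is a circle), and $e^{2\pi i t m}\ne 1$ for generic $t$ even when $m\in\Z$. Integrality of the coefficient is not enough; you need the coefficient of $t$ to \emph{vanish identically}. In the trefoil example this is what actually happens: the crossing terms contribute $t(s-p)$ while the intersection terms contribute $-t(a+b+c)$, and one checks $a+b+c=s-p$, so the two cancel on the nose, not merely modulo~$\Z$. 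Your proof plan should therefore replace the integrality claim by an exact cancellation identity between the crossing and intersection contributions to the $t$-derivative of $\mbe{D}$; this is a different (and stronger) combinatorial statement than the one you wrote, and it is not a consequence of the constraints defining $\tilde F_D$.
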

\def\cprime{$'$} \def\cprime{$'$}


\begin{thebibliography}{1}

\bibitem{MR690578}
Rodney~J. Baxter.
\newblock {\em Exactly solved models in statistical mechanics}.
\newblock Academic Press, Inc. [Harcourt Brace Jovanovich, Publishers], London,
  1982.

\bibitem{MR1959408}
Gerhard Burde and Heiner Zieschang.
\newblock {\em Knots}, volume~5 of {\em De Gruyter Studies in Mathematics}.
\newblock Walter de Gruyter \& Co., Berlin, second edition, 2003.

\bibitem{MR1012438}
David~M. Goldschmidt and V.~F.~R. Jones.
\newblock Metaplectic link invariants.
\newblock {\em Geom. Dedicata}, 31(2):165--191, 1989.

\bibitem{MR990215}
V.~F.~R. Jones.
\newblock On knot invariants related to some statistical mechanical models.
\newblock {\em Pacific J. Math.}, 137(2):311--334, 1989.

\bibitem{MR3491319}
Rinat Kashaev.
\newblock The {Y}ang-{B}axter relation and gauge invariance.
\newblock {\em J. Phys. A}, 49(16):164001, 16, 2016.

\bibitem{MR974081}
Tsuyoshi Kobayashi, Hitoshi Murakami, and Jun Murakami.
\newblock Cyclotomic invariants for links.
\newblock {\em Proc. Japan Acad. Ser. A Math. Sci.}, 64(7):235--238, 1988.

\bibitem{MR2350287}
Liam Watson.
\newblock Knots with identical {K}hovanov homology.
\newblock {\em Algebr. Geom. Topol.}, 7:1389--1407, 2007.

\bibitem{MR0165033}
Andr\'e Weil.
\newblock Sur certains groupes d'op\'erateurs unitaires.
\newblock {\em Acta Math.}, 111:143--211, 1964.

\end{thebibliography}

  \end{document}